\numberwithin{equation}{section}
\newcounter{mycounter}
\newtheorem{thm}[mycounter]{Theorem}
\newtheorem{lem}[mycounter]{Lemma}
\newtheorem{prop}[mycounter]{Proposition}
\newcommand{\dint}{\,\mathrm{d}}
\newcommand{\Rn}{\mathbb{R}}
\newcommand{\Nn}{\mathbb{N}}
\newcommand{\Lp}{\mathnormal{L}}
\newcommand{\eps}{\varepsilon}
\newcommand{\op}[1]{\mathrm{#1}}
\DeclareMathOperator{\spr}{spr}
\begin{document}
\title{A Sufficient Condition for the Existence of a Principal Eigenvalue for
  Nonlocal Diffusion Equations with Applications.}
\author{Daniel B. Smith}
\date{\today} 
\maketitle

\begin{abstract}
  Considerable work has gone into studying the properties of nonlocal diffusion
  equations. The existence of a principal eigenvalue has been a significant
  portion of this work. While there are good results for the existence of a
  principal eigenvalue equations on a bounded domain, few results exist for
  unbounded domains. On bounded domains, the Krein-Rutman theorem on Banach
  spaces is a common tool for showing existence.  This article shows that
  generalized Krein-Rutman can be used on unbounded domains and that the theory
  of positive operators can serve as a powerful tool in the analysis of
  nonlocal diffusion equations.  In particular, a useful sufficient condition
  for the existence of a principal eigenvalue is given.
\end{abstract}

\section{Introduction}

Nonlocal diffusion equations come up in a number of contexts. These range from
biology \cite{Hutson2003,Bates2007,Maly2001,Weichsel2010}, to materials science
\cite{Chen2011}, and graph theory. Much of the analysis to this point has been
done on bounded domains \cite{Coville2010}. The results presented here
represent a first step in analyzing nonlocal diffusion equations on unbounded
domains. The theory of positive operators on Banach lattices is employed to
give useful conditions to show existence of a principal eigenvalue in the
absence of compact domains. Many more results are likely to come from applying
known results about positive operators to nonlocal diffusion equations. In
particular, in Section \ref{maximum}, we use the existence of a particular type
of maximum principal implies exponential convergence to zero.

The existence of a principal eigenvalue has been used to study many nonlocal
diffusion equations, both linear and nonlinear \cite{Bates2007,
  Garcia-Melian2009}. Energy methods and Fourier analysis have been used to
provide polynomial bounds on the decay of nonlocal diffusion equations
\cite{Chasseigne2006, Ignat2009}, with exponential decay shown in some special
cases \cite{Ignat2009}. However, estimates of the principal eigenvalue would be
able to definitely show exponential decay. While much work has been done on the
existence of principal eigenvalues on bounded domains, few general results
exist for unbounded domains \cite{Ignat2012, Coville2010}. It is important to
note that, even on bounded domains, existence of a principal eigenvalue is
still not guaranteed. See, for example, the counter example in section
\ref{sec:exist}. The work here uses the theory of positive operators to find a
sufficient condition for the existence of a principal eigenvalue without any
assumptions on the boundary conditions.  The method also gives a technique for
estimating the value of the eigenvalue.  Further work is needed to characterize
the multiplicity of the eigenvalue or the existence of a spectral gap.

The general equation of interest is:
\begin{equation}\label{dyna}
  \dot{u}(x,t)=\int\limits_{\Omega}J(x,y)u(y,t)\dint y-a(x)u(x,t)
\end{equation}
where $\Omega$ is some connected, possibly unbounded subset of the real line
$\Rn$ and $u\in\Lp^p(\Omega)$ for $1\leq p<\infty$.  This equation is often
interpreted as modeling some form population dispersal. Individuals propogate
from point $y$ to point $x$ at a rate $J(x, y)$ and individuals die off at a
rate $a(x)$ depending on their location. Note that the proofs here
immediately generalizes to $\Rn^n$.  $a$ is assumed to be continuous and
bounded in the sense that there exists $c$, $c^\prime$ such that:
\begin{equation}
  0<c<a(x)<c^\prime
\end{equation}
The two additional hypotheses on $J$ are that $J(x,y)\geq0$ and $\op{J}$ is
bounded with non-zero spectrum, where $\op{J}$:
\begin{equation}
  \op{J}u = \int_\Omega J(x,y)u(y)\dint y
\end{equation}
The result in Theorem \ref{condition} gives a quite general condition for the
existence of a principal eigenvalue for $\op{L}$. There is no particular reason
that $\op{J}$ has to be of integral type, but the discussion here will be
limited to that case. If $\op{J}$ is a more general positive operator, an
appropriate compact topological space will have to be found to ensure the
conclusion from Lemma \ref{K-R-eigen}. Several applications of the theorem are
given in section \ref{applications}.

Define the two related linear operators:
\begin{equation}
  \op{L}u=\op{J}u-a u
  \qquad
  \op{A}_\lambda u=\frac{\op{J}u}{\lambda+a}
\end{equation}
$\op{L}$ is simply the operator that defines the dynamics of \eqref{dyna}, and
$\op{A}_\lambda$ is a related family of operators. It is obvious that
$\op{A}_\lambda$ is positive whenever $\lambda>-\inf a(x)$. The two operators
are related in the sense that $\op{L}$ has $\lambda>-\inf a$ as an eigenvalue
if and only if $\op{A}_\lambda$ has an eigenvalue equal to one.

The theorem can be stated as such:
\begin{thm}\label{condition}
  If:
  \begin{equation}
    \lim_{\lambda\to(-\inf a)^+}\spr(\op{A}_\lambda)>1
  \end{equation}
  where $\spr(\op{A}_\lambda)$ is the spectral radius and one of the conditions
  in Lemma \ref{K-R-eigen} holds, then $\op{L}$ has a principal eigenvalue,
  $\lambda_0\in\Rn$, with positive eigenfunction such that for any element
  $\lambda\in\sigma(\op{L})$, we have the inequality $\Re\lambda\leq\lambda_0$.
\end{thm}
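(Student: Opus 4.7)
The plan is to exploit the equivalence noted in the paper: $\lambda_0 > -\inf a$ is an eigenvalue of $\op{L}$ with positive eigenfunction if and only if $\op{A}_{\lambda_0}$ has $1$ as an eigenvalue with positive eigenfunction. Assuming Lemma \ref{K-R-eigen} furnishes the statement that the spectral radius of a positive $\op{A}_\lambda$ is attained as an eigenvalue with positive eigenvector, the task reduces to locating some $\lambda_0 \in (-\inf a, \infty)$ with $\spr(\op{A}_{\lambda_0}) = 1$ and then verifying that this $\lambda_0$ dominates the real parts of every spectral value of $\op{L}$.

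First I would establish that $\lambda \mapsto \spr(\op{A}_\lambda)$ is continuous and non-increasing on $(-\inf a, \infty)$. Monotonicity is immediate in the cone of positive operators: since $\lambda \mapsto 1/(\lambda + a(x))$ is pointwise decreasing, $\op{A}_{\lambda_1} \geq \op{A}_{\lambda_2}$ whenever $\lambda_1 \leq \lambda_2$, and the spectral radius is monotone under this ordering. Continuity follows from the operator-norm continuity of the multiplier $1/(\lambda + a)$ on compact sub-intervals. Next, the trivial bound $\|\op{A}_\lambda\| \leq \|\op{J}\|/(c + \lambda) \to 0$ as $\lambda \to \infty$ gives $\spr(\op{A}_\lambda) \to 0$. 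Combining this decay with the hypothesis $\lim_{\lambda \to (-\inf a)^+} \spr(\op{A}_\lambda) > 1$, the intermediate value theorem produces some $\lambda_0 \in (-\inf a, \infty)$ with $\spr(\op{A}_{\lambda_0}) = 1$. Lemma \ref{K-R-eigen} then supplies a positive $u_0$ with $\op{A}_{\lambda_0} u_0 = u_0$, which by the equivalence yields $\op{L} u_0 = \lambda_0 u_0$.

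For the dominance claim, take $\mu \in \sigma(\op{L})$. If $\Re\mu \leq -\inf a$ then $\Re\mu < \lambda_0$ is trivial, so assume $\Re\mu > -\inf a$. Then $\mu + a(x)$ is bounded away from zero in modulus, so $\op{A}_\mu = \op{J}/(\mu + a)$ is a well-defined bounded operator on $\Lp^p$, and $\mu - \op{L}$ failing to be invertible translates to $I - \op{A}_\mu$ failing to be invertible, placing $1 \in \sigma(\op{A}_\mu)$. The pointwise modulus estimate
\[
|\op{A}_\mu u|(x) \leq \frac{\op{J}|u|(x)}{|\mu + a(x)|} \leq \op{A}_{\Re\mu} |u|(x)
\]
propagates to iterates and gives $\|\op{A}_\mu^n\| \leq \|\op{A}_{\Re\mu}^n\|$, whence $\spr(\op{A}_\mu) \leq \spr(\op{A}_{\Re\mu})$. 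Thus $\spr(\op{A}_{\Re\mu}) \geq 1$, and monotonicity of the spectral radius forces $\Re\mu \leq \lambda_0$.

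The main obstacle is likely the continuity of $\lambda \mapsto \spr(\op{A}_\lambda)$ on the unbounded parameter range: monotonicity and the decay at infinity are cheap, but continuity of the spectral radius under norm convergence of a positive-operator family on $\Lp^p(\Omega)$ needs some care, especially without assuming compactness of $\op{J}$. Secondarily, one must verify that whichever condition from Lemma \ref{K-R-eigen} is being invoked by hypothesis actually applies at the specific $\lambda_0$ produced by the intermediate value argument, not merely in the limit $\lambda \to (-\inf a)^+$; if the condition is monotone along the family this is routine, but it may require unpacking the precise statement of that lemma.
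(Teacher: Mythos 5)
Your overall strategy is sound and lands on the same eigenvalue as the paper, but the two arguments diverge in how they organize the work. The paper first proves that $\op{L}$ is resolvent-positive (via the Neumann series $v=\sum_{j}\op{A}_\mu^j\frac{f}{\mu+a}$ for $\mu$ large), then invokes Nussbaum's characterization $\spr\bigl((\mu-\op{L})^{-1}\bigr)=1/(\mu-\lambda_0)$ to define $\lambda_0$ and to obtain the half-plane localization $\Re\lambda\leq\lambda_0$ of $\sigma(\op{L})$ essentially for free; it then translates $\lambda_0$ back into the statement $\spr(\op{A}_{\lambda_0})=1$ using the equivalence $(\op{I}-\op{A}_\lambda)^{-1}\geq0\iff\spr(\op{A}_\lambda)<1$. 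You skip the resolvent machinery entirely: you locate $\lambda_0$ by the intermediate value theorem applied to $\lambda\mapsto\spr(\op{A}_\lambda)$, and you prove the spectral dominance directly via the modulus estimate $|\op{A}_\mu^n u|\leq\op{A}_{\Re\mu}^n|u|$, hence $\spr(\op{A}_\mu)\leq\spr(\op{A}_{\Re\mu})$, together with the factorization $\mu-\op{L}=(\mu+a)(\op{I}-\op{A}_\mu)$. Your route is more elementary and self-contained (no appeal to Nussbaum), and it makes the half-plane claim --- which the paper leaves implicit in the citation --- completely explicit; the paper's route has the advantage of transferring more readily to operators not of the form $\op{J}-a$.

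Two points need repair. First, in your dominance step you conclude $\Re\mu\leq\lambda_0$ from $\spr(\op{A}_{\Re\mu})\geq1=\spr(\op{A}_{\lambda_0})$ and monotonicity, but a non-increasing function can equal $1$ on a whole interval to the right of your $\lambda_0$, so ``some $\lambda_0$ with $\spr(\op{A}_{\lambda_0})=1$'' is not enough; you must take $\lambda_0=\sup\{\lambda>-\inf a:\spr(\op{A}_\lambda)\geq1\}$, which is finite by the decay at infinity and attained by continuity --- this is exactly why the paper takes the \emph{largest} such $\lambda_0$. Second, your continuity argument (``operator-norm continuity of the multiplier'') does not work as stated: the spectral radius is only upper semicontinuous under norm convergence. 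You correctly flagged this as the main obstacle; the paper closes it in a separate lemma by combining upper semicontinuity (Newburgh) with the two-sided order bound $\op{A}_{\lambda^\prime}\leq c(\lambda,\lambda^\prime)\op{A}_\lambda$, where $c(\lambda,\lambda^\prime)=\sup_{x}\frac{\lambda+a(x)}{\lambda^\prime+a(x)}\to1$, which yields lower semicontinuity via monotonicity of $\spr$ on positive operators. With those two repairs your argument is complete. (Your closing worry about whether Lemma \ref{K-R-eigen} applies at the specific $\lambda_0$ is unfounded: that lemma is stated for every $\lambda>-\inf a$.)
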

The condition in Theorem \ref{condition} holds for any boundary conditions and
is not dependent upon the domain being bounded.

A quick lemma that gives a few conditions necessary for the existence of an
eigenvalue:
\begin{lem}\label{K-R-eigen}
  $\op{A}_\lambda$ has a positive eigenvalue equal to its spectral radius with
  a positive eigenfunction in $\Lp^1$ whenever $\lambda>-\inf a$ if any
  of the following conditions hold:
  \begin{enumerate}
  \item Range of $\op{J}^n\subset \Lp^p$ for $1<p<\infty$ for some $n\in\Nn$.
    \label{p>1}
  \item $J(x,y)=K(x-y)$ is of convolution type with $K$ absolutely bounded.
    \label{convolution}
  \item $\exists c(x)\in\Lp^1$ such that $J(x,y)\leq c(x)$ 
    \label{boundedness}
  \end{enumerate}
\end{lem}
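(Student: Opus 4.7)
The plan is to apply a Krein-Rutman-type theorem to the positive bounded operator $\op{A}_\lambda$ on the Banach lattice $\Lp^1(\Omega)$. Because $\op{A}_\lambda$ need not be compact on an unbounded $\Omega$, the classical Krein-Rutman theorem does not apply directly. Instead, I would produce a nonempty convex subset of the positive cone $\Lp^1_+(\Omega)$ that is compact in an appropriate (possibly weaker than norm) topology and invariant under a renormalized $\op{A}_\lambda$, and then appeal to the Schauder-Tychonoff fixed-point theorem to extract a positive fixed point.

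Concretely, set $r := \spr(\op{A}_\lambda)$; one first checks $r>0$ using that $\op{J}$ has nonzero spectrum and that $1/(\lambda+a)$ is bounded above and below by positive constants. To construct approximate eigenvectors, I would use the resolvent $R_\mu := (\mu\op{I}-\op{A}_\lambda)^{-1}$ for $\mu>r$, which is a positive operator by its Neumann series $\sum_{n\geq 0}\mu^{-n-1}\op{A}_\lambda^n$. The standard resolvent-blow-up argument shows $\|R_\mu\|\to\infty$ as $\mu\searrow r$ (otherwise analytic continuation would place $r$ outside $\sigma(\op{A}_\lambda)$), so by uniform boundedness there is a nonzero $u_0\geq 0$ with $\|R_\mu u_0\|_1\to\infty$. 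Normalizing $v_\mu := R_\mu u_0 / \|R_\mu u_0\|_1$, the identity $\op{A}_\lambda R_\mu = \mu R_\mu-\op{I}$ yields
\begin{equation}
  \op{A}_\lambda v_\mu = \mu v_\mu - \frac{u_0}{\|R_\mu u_0\|_1},
\end{equation}
so any accumulation point $v\geq 0$ of $\{v_\mu\}$, in a topology where $\op{A}_\lambda$ is sequentially continuous, will satisfy $\op{A}_\lambda v = r v$.

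The main obstacle is extracting a \emph{nonzero} accumulation point, and this is where the three hypotheses of Lemma \ref{K-R-eigen} enter. Condition \ref{p>1} provides smoothing: some iterate $\op{J}^n$ maps $\Lp^1$ into the reflexive space $\Lp^p$ for $p>1$, giving a weakly convergent subsequence of $\{\op{J}^n v_\mu\}$. Condition \ref{convolution} combines the translation structure of convolution with the uniform bound on $K$ to produce, via a Riesz-Kolmogorov argument after truncation, strong $\Lp^1$-precompactness of $\{\op{J} v_\mu\}$. Condition \ref{boundedness} gives the pointwise bound $|\op{J}u(x)|\leq c(x)\|u\|_1$, so $\{\op{J}v_\mu\}$ is dominated by the fixed $\Lp^1$ function $c$, hence uniformly integrable and tight, and relatively weakly compact in $\Lp^1$ by Dunford-Pettis. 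In each case, passing to the weak or strong limit through $\op{A}_\lambda$ (and replacing $v_\mu$ by one or two applications of $\op{A}_\lambda$ if necessary to access the smoothing) preserves positivity, and $v\neq 0$ follows by observing that $u_0/\|R_\mu u_0\|_1\to 0$ while the normalized image $\op{A}_\lambda v_\mu = \mu v_\mu - o(1)$ cannot have a weak limit of zero without forcing $r=0$. I expect the principal technical hurdle to be enforcing tightness at infinity, particularly under Condition \ref{p>1}, where the smoothing into $\Lp^p$ does not by itself prevent the mass of $v_\mu$ from escaping to infinity, so one must exploit some additional invariance or decay coming from the factor $1/(\lambda+a)$ bounded below.
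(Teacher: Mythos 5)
Your strategy---reproving a Krein--Rutman-type statement from scratch via positivity of the resolvent, blow-up of $\|R_\mu\|$ as $\mu\searrow r$, and the approximate eigenvectors $v_\mu=R_\mu u_0/\|R_\mu u_0\|_1$---is sound, and it is essentially the internal mechanism of the generalized Krein--Rutman theorem that the paper simply cites from Schaefer and applies as a black box. (One caveat: you need the fact that $r=\spr(\op{A}_\lambda)$ itself, and not merely some point of modulus $r$, lies in $\sigma(\op{A}_\lambda)$; for positive operators on Banach lattices this is a genuine theorem and should be invoked as such rather than gestured at with ``analytic continuation.'') Under Condition \ref{boundedness} your argument does close: $|\op{J}v_\mu|\leq c\,\|v_\mu\|_1=c$ places $\{\op{J}v_\mu\}$ in the order interval $[0,c]$, which is uniformly integrable \emph{and} tight because $c\in\Lp^1$, hence relatively weakly compact; a bounded operator is weak--weak sequentially continuous, so a weak limit point $v$ satisfies $\op{A}_\lambda v=rv$; and because weak $\Lp^1$ convergence may be tested against the constant function $1\in\Lp^\infty$, the normalization $\int v_\mu=1$ survives the limit and forces $v\neq0$. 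That last observation is exactly the mechanism you were missing, and it is special to weak convergence in $\Lp^1$.

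The genuine gap is under Conditions \ref{p>1} and \ref{convolution}, and you have named it yourself without resolving it: nothing in your argument prevents the weak limit of $\{v_\mu\}$, or of $\{\op{J}^n v_\mu\}$, from being zero. Boundedness in $\Lp^p$ with $p>1$ controls concentration but not escape of mass to infinity (translates of a fixed bump are bounded in every $\Lp^p$, have $\Lp^1$ norm one, and tend weakly to $0$ in $\Lp^p(\Rn)$), and weak convergence in a reflexive $\Lp^p$ does not preserve the $\Lp^1$ normalization, so the step ``$v\neq0$'' has no support there. Likewise, the Riesz--Kolmogorov criterion you invoke for the convolution case requires precisely the uniform decay at infinity that is in question, and ``after truncation'' does not supply it. The paper takes a structurally different route at this point in order to dodge the issue: it transfers the problem to $ca(\Sigma)$, whose unit ball is weak-$*$ compact outright, applies the generalized Krein--Rutman theorem there to obtain an eigen-\emph{measure}, and only afterwards recovers an $\Lp^1$ eigenfunction by a Radon--Nikodym regularity bootstrap from the eigenvalue equation $\op{J}u=(\lambda+\kappa)u$. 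If you wish to remain inside your framework, Condition \ref{boundedness} admits a cleaner repair still: $\op{J}$ maps the unit ball of $\Lp^1$ into a weakly compact order interval, so by the Dunford--Pettis property of $\Lp^1$ the square $\op{J}^2$ is norm compact, the classical Krein--Rutman theorem applies to $\op{A}_\lambda^2$, and an eigenvector of $\op{A}_\lambda^2$ for $r^2$ yields one of $\op{A}_\lambda$ for $r$ via $u\mapsto\op{A}_\lambda u+ru$. No comparable compactness is visible from Condition \ref{p>1} alone, so that case remains open in your write-up.
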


\begin{proof}
  $\op{A}_\lambda$ is obviously a positive operator on the appropriate lattice
  in the sense that it preserves the positive cone. Because of the boundedness
  of $a+\lambda$, we know that $\op{A}_\lambda$ has range in the same function
  space(s) as $\op{J}$. For that reason, the rest of the proof will only be
  concerned with $\op{J}$. Also, recall that we are assuming $\op{J}$ has 
  non-zero spectrum for the entirety of this article. 

  For condition \ref{p>1}, we know that all of the $\Lp^p$ spaces where
  $1<p<\infty$ are weakly compact, so we can invoke the generalized
  Krein-Rutman theorem on locally convex topological spaces immediately since
  $\op{A}_\lambda$ has range in a compact topological space. \cite{Top-Vect}

  Condition \ref{convolution} allows us to consider the case where $\op{J}$ is
  from $\Lp^1$ to $\Lp^1$. Recall that the unit ball of $ca(\Sigma)$, the space
  of countably additive measures with respect to a $\sigma$-algebra $\Sigma$,
  is compact with respect to the weak topology, and we can again use
  Krein-Rutman to get the existence of an eigenvalue. Then, for any
  eigenfunction $u$, we can write:
  \begin{equation}\label{eigen-multiply}
    \op{J}u=(\lambda+\kappa)u
  \end{equation}
  It is easy to show that $u$ must be absolutely continuous with respect to the
  Lebesgue measure because of the convolution with a bounded
  function. Therefore, $u$ must be isometric to an $\Lp^1$ function by the
  Radon-Nikodym theorem.

  Finally, condition \ref{boundedness} allows us to make the same argument as
  for condition \ref{convolution}. It is straightforward to show that
  $\op{J}:ca(\Sigma)\to\Lp^1$ and that any eigenfunction must be an $\Lp^1$
  function.
\end{proof}

The three conditions presented in Lemma \ref{K-R-eigen} are surely not
exhaustive, as will be seen in Section \ref{applications}. More generally, if
$\op{J}$ is a bounded operator from $ca(\Sigma)\to\Lp^1$, the conclusions also
hold.

We need one more lemma about $\op{A}_\lambda$. 
\begin{lem}
  $\spr(\op{A}_\lambda)$ is a continuous, monotonically decreasing function of
  $\lambda$ which converges to 0.
\end{lem}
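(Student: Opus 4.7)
The plan is to derive all three claims from the pointwise identity
\[
\op{A}_{\lambda_1} u = \frac{\lambda_2 + a}{\lambda_1 + a}\,\op{A}_{\lambda_2} u,
\]
obtained by factoring $(\lambda_1+a)^{-1}$ as $(\lambda_2+a)/(\lambda_1+a)$ times $(\lambda_2+a)^{-1}$, together with the standard fact that on a Banach lattice the spectral radius of a positive operator is monotone in the operator order. For monotonicity, if $\lambda_1<\lambda_2$ the scalar multiplier above is pointwise less than $1$, so $\op{A}_{\lambda_1}u\geq\op{A}_{\lambda_2}u$ for $u\geq 0$. Iterating, and combining $|\op{A}^n u|\leq \op{A}^n|u|$ with the lattice norm on $\Lp^p$, yields $\|\op{A}_{\lambda_1}^n\|\geq\|\op{A}_{\lambda_2}^n\|$; Gelfand's formula then gives $\spr(\op{A}_{\lambda_1})\geq\spr(\op{A}_{\lambda_2})$.

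For continuity I will exploit a two-sided version of the same sandwich. Since $a$ is bounded and bounded below by $c>0$, the scalar ratio $(\lambda_2+a(x))/(\lambda_1+a(x))$ takes values in an interval $[c_{\lambda_1,\lambda_2},C_{\lambda_1,\lambda_2}]$ with both endpoints tending to $1$ as $\lambda_1\to\lambda_2$. The resulting operator sandwich $c_{\lambda_1,\lambda_2}\op{A}_{\lambda_2}\leq\op{A}_{\lambda_1}\leq C_{\lambda_1,\lambda_2}\op{A}_{\lambda_2}$ iterates to the same sandwich between $n$-th powers; taking norms, $n$-th roots, and the limit $n\to\infty$ then gives
\[
c_{\lambda_1,\lambda_2}\,\spr(\op{A}_{\lambda_2})\leq \spr(\op{A}_{\lambda_1})\leq C_{\lambda_1,\lambda_2}\,\spr(\op{A}_{\lambda_2}).
\]
Continuity at $\lambda_2$ follows by letting $\lambda_1\to\lambda_2$.

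The $\lambda\to\infty$ limit is the easiest piece: the elementary bound $\|\op{A}_\lambda u\|_p\leq \|\op{J}u\|_p/(\lambda+\inf a)$ gives $\|\op{A}_\lambda\|\leq \|\op{J}\|/(\lambda+\inf a)\to 0$, and the spectral radius is dominated by the operator norm. The real obstacle is continuity, since the spectral radius is in general only upper semicontinuous under operator-norm perturbations, so one cannot simply invoke norm-continuity of $\lambda\mapsto\op{A}_\lambda$. The positivity of the family and the explicit two-sided multiplicative control of $\op{A}_{\lambda_1}$ by $\op{A}_{\lambda_2}$ is exactly what sidesteps this pitfall.
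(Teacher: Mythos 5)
Your proof is correct, and its engine is the same one the paper uses: the pointwise multiplier relating $\op{A}_{\lambda_1}$ to $\op{A}_{\lambda_2}$ (your $C_{\lambda_1,\lambda_2}$ is exactly the paper's $c(\lambda,\lambda^\prime)=\sup_x\frac{\lambda+a(x)}{\lambda^\prime+a(x)}$), monotonicity of the spectral radius under the operator order on a Banach lattice, and the operator-norm bound $\|\op{A}_\lambda\|\leq\|\op{J}\|/(\lambda+\inf a)$ for the limit at infinity. The difference is in how much you prove yourself: the paper cites Marek for order-monotonicity of the spectral radius and Newburgh for upper semicontinuity, and then uses the multiplier only one-sidedly to recover lower semicontinuity along $\lambda\downarrow\lambda^\prime$; you instead derive the monotonicity from $0\leq S\leq T\Rightarrow\|S^n\|\leq\|T^n\|$ plus Gelfand's formula, and you run the sandwich two-sidedly, $c_{\lambda_1,\lambda_2}\spr(\op{A}_{\lambda_2})\leq\spr(\op{A}_{\lambda_1})\leq C_{\lambda_1,\lambda_2}\spr(\op{A}_{\lambda_2})$, which yields full continuity in one stroke without any semicontinuity citation. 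That makes your argument more self-contained and symmetric, at the cost of re-deriving a standard fact; the paper's version is shorter on the page but leans on two external references. One slip to fix: with your convention $\op{A}_{\lambda_1}u=\frac{\lambda_2+a}{\lambda_1+a}\op{A}_{\lambda_2}u$ and $\lambda_1<\lambda_2$, the multiplier $\frac{\lambda_2+a}{\lambda_1+a}$ is pointwise \emph{greater} than $1$, not less; that is what gives $\op{A}_{\lambda_1}u\geq\op{A}_{\lambda_2}u$ for $u\geq0$, so your stated conclusion is right but the intermediate sentence contradicts it.
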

\begin{proof}
  First, we want to show that $\spr(\op{A}_\lambda)\to0$ as
  $\lambda\to\infty$. To show that, simply observe that we have the $\Lp^1$
  operator norm inequality:
  \begin{equation}\label{op-ineq}
    \|\op{A}_\lambda\|_{op}\leq\left\|\frac{1}{\lambda+a}\right\|_\infty
    \|J\star\|_{op}=\frac{1}{\lambda+\inf a}
  \end{equation}
  The right hand side of that equation implies the norm of $\op{A}_\lambda$
  goes to zero asymptotically, which bounds the spectral radius above. Finally,
  we want to show that the spectral radius of $\op{A}_\lambda$ is a
  monotonically decreasing, continuous function of $\lambda$. From previous
  results, we know that $\spr(\op{A}_\lambda)$ is upper-semicontinuous
  \cite{Newburgh1951}, and that $\op{A}_\lambda\leq\op{A}_\mu$ implies that
  $\spr(\op{A}_\lambda)\leq\spr(\op{A}_\mu)$ \cite{Marek1970}. Since
  $\op{A}_\lambda<\op{A}_\mu$ whenever $\lambda>\mu$, we have that
  $\spr(\op{A}_\lambda)$ is a monotonically decreasing, upper-continuous
  function.  

  It remains to show that the spectral radius function is lower-continuous.
  Define the function $c(\lambda,\lambda^\prime)$ for $\lambda$,
  $\lambda^\prime>-\inf a$:
  \begin{equation}
    c(\lambda,\lambda^\prime)
    =\sup_{x\in\Omega}\frac{\lambda+a(x)}{\lambda^\prime+a(x)}
  \end{equation}
  Observe that $c$ is a continuous function and $c(\lambda,\lambda)=1$ for all
  $\lambda>-\inf a(x)$. Assume $\lambda>\lambda^\prime$. Then,
  $c(\lambda,\lambda^\prime)>1$. We also have the two inequalities:
  \begin{equation}
    \op{A}_\lambda\leq\op{A}_{\lambda^\prime}
    \qquad
    c(\lambda,\lambda^\prime)\op{A}_\lambda\geq\op{A}_{\lambda^\prime}
  \end{equation}
  We can now directly calculate:
  \begin{equation}
    \lim_{\lambda\,\downarrow\,\lambda^\prime}\spr(\op{A}_\lambda)
    \geq\lim_{\lambda\,\downarrow\,\lambda^\prime}c(\lambda,\lambda^\prime)
    \spr\op{A}_{\lambda^\prime}=\spr(\op{A}_{\lambda^\prime}).
  \end{equation}
  which completes the proof of lower-continuity.
\end{proof}
Note that the result of the above lemma holds even withtout any of the
necessary conditions for Lemma \ref{K-R-eigen}. That fact becomes important for
the result in Section \ref{maximum}. We can now move on to the rest of the
proof.

\section{Proof of the Theorem}

First, we will show that the operator $\op{L}$ is positive resolvent. Choose
$\lambda\in\Rn$ so large that $\mu\geq\lambda$ implies $\mu\in\rho(\op{L})$ the
resolvent of $\op{L}$ and that $\spr(\op{A}_\mu)<1$. Assume $f\geq0$,
$f\in\Lp^1$, is in the range of $\mu-\op{L}$:
\begin{equation}
  \mu v-\op{L}v=f
\end{equation}
for some $v\in\Lp^1$. Expanding the above equation gives:
\begin{align}\nonumber
  \mu v-\op{J}v+a v &= f & \iff \\\nonumber
  v-\frac{\op{J}v}{\mu+a} &= \frac{f}{\mu+a} & \iff \\\nonumber
  v-\op{A}_\lambda v &= \frac{f}{\mu+a} & \iff \\
  v &= \sum_{j=0}^\infty\op{A}_\lambda^j\frac{f}{\mu+a}\geq0
\end{align}
Since we have assumed that $\spr(\op{A}_\mu)<1$, the above series converges 
and $(\mu-\op{L})^{-1}\geq0$. By \cite{Nussbaum1984}, we know that there exists
$\lambda_0$ such that:
\begin{equation}
  \spr\Big((\mu-\op{L})^{-1}\Big)=\frac{1}{\mu-\lambda_0}
\end{equation}
for all $\mu>\lambda_0$. We also have the characterization:
\begin{equation}
  \lambda_0=\inf\{\lambda\in\Rn|(\lambda-\op{L})^{-1}\geq0\}
\end{equation}
Assume $\lambda_0>-\inf a$. By \cite{Top-Vect}, we know that:
\begin{equation}
  (\op{I}-\op{A}_\lambda)^{-1}\geq0 \iff \spr(\op{A}_\lambda)<1
\end{equation}
The above condition implies that $\spr(\op{A}_\lambda)<1$ for all
$\lambda>\lambda_0$ and $\spr(\op{A}_{\lambda_0})\geq1$. The continuity of
$\spr(\op{A}_\lambda)$ implies that $\spr(\op{A}_{\lambda_0})=1$. From our
application of the Krein-Rutman theorem, we know there exists $u\geq0$ such
that:
\begin{equation}
  \op{A}_{\lambda_0}u=u
\end{equation}
From our definition of $\op{A}_\lambda$, the above implies:
\begin{equation}
  \op{L}u=\lambda_0u
\end{equation}
$\lambda_0$ is therefore our principal eigenvalue and $u$ its associated
positive eigenfunction.

All that remains to be shown is that $\lambda_0>-\inf a$. Recall the hypothesis
on the spectral radius limit:
\begin{equation}
  \lim_{\lambda\to(-\inf a)^+}\spr(\op{A}_\lambda)>1
\end{equation}
That inequality implies there exists a largest $\lambda_0>-\inf a$ such that
$\spr(\op{A}_{\lambda_0})=1$, and we are finished.

Note that in the proof of the theorem, the structure of the integral equation
only comes up in Lemma \ref{K-R-eigen}. Define the more general operator
$\op{B}$ as:
\begin{equation}
  \op{B}f = \op{K}f - af
\end{equation}
where $\op{K}$ is some positive operator. The only conditions we need for
Theorem \ref{condition} to hold are that $\op{K}$ has non-zero spectrum and the
range of $\op{K}^n$ is inside a compact topological space for some finite
$n\in\Nn$.\footnote{See Theorem 6.6 in Appendix V. and the following example in
  \cite{Top-Vect} for details.} In particular, if $\op{K}^n\rightarrow
U\subset\Lp^p$ for $1<p<\infty$, Theorem \ref{condition} holds.

\section{Applying the Theorem}\label{applications}

The result in Theorem \ref{condition} can be directly applied to prove the
existence of a principal eigenvalue for a variety of problems. First, we will
show that the existence of a maximum principal implies exponential convergence
even without the existence of a principal eigenvalue. Then, we will give a
couple of abstract propositions with and without symmetry conditions on $J$.

\subsection{Maximum Principle}\label{maximum}

We define our maximum principle similar to \cite{Coville2010} as the condition:
\begin{equation}\label{max-prin-def}
  \op{L}u\leq0\implies u\geq0
\end{equation} 
Recall that we can rewrite the above equation to read:
\begin{equation}
  \op{L}u=f\leq0 \qquad\iff\qquad (\op{I}-\op{A}_0)u=g\geq0
\end{equation}
where $-a(x)g(x)=f(x)$. Our maximum principle is now equivalent to the
assertion that $(\op{I}-\op{A}_0)^{-1}$ is a positive operator. Using a result
from Appendix 2 of \cite{Top-Vect}, we know that $(\op{I}-\op{A}_0)^{-1}$ is
positive if and only if $\spr(\op{A}_0)<1$. Recalling the proof of Theorem
\ref{condition}, we know that $\spr(\op{A})<1$ if and only if $\spr(\op{L})<0$.
Our maximum principle therefore implies exponential convergence to zero for
the nonlocal diffusion equation defined by $\op{L}$.

\subsection{Existence Propositions}\label{sec:exist}

For all of the results in this section, we will assume at least one of the
conditions in Lemma \ref{K-R-eigen} holds.

\begin{prop}\label{achieves-inf}
  Assume $a(x)$ is locally Lipschitz and reaches its infimum at $x^\star$ and
  $J(x,y)=J(y,x)$.  Also, assume the range of $\op{J}$ is in $\Lp^2$ and there
  exists an open neighborhood $U\subset\Rn^2$ of $(x^\star, x^\star)$ such that
  $J(x,y)>c$ for all $(x,y)\in U$. Then, $\op{L}$ has a principal eigenvalue.
\end{prop}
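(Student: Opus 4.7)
The plan is to verify the hypotheses of Theorem \ref{condition}. The first condition of Lemma \ref{K-R-eigen} holds since $\op{J}$ maps into $\Lp^2$, so the task reduces to showing
\begin{equation*}
  \lim_{\lambda \to (-\inf a)^+}\spr(\op{A}_\lambda) > 1.
\end{equation*}
Writing $m = \inf a = a(x^\star)$, I will in fact show this limit is $+\infty$.

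The idea is to dominate $\op{J}$ from below by a rank-one positive operator whose spectral radius can be written out. Pick $\eps > 0$ small enough that $B_\eps(x^\star)\times B_\eps(x^\star)\subset U$ and that $a$ is Lipschitz on $B_\eps(x^\star)$ with some constant $L>0$; the first is possible because $U$ is open, and the second because $a$ is locally Lipschitz. Define
\begin{equation*}
  \op{J}'u = c\,\mathbf{1}_{B_\eps(x^\star)}\int_{B_\eps(x^\star)} u(y)\dint y,
  \qquad
  \op{A}'_\lambda u = \frac{\op{J}'u}{\lambda + a}.
\end{equation*}
Then $\op{J}'u\leq\op{J}u$ for every $u\geq 0$, so $\op{A}'_\lambda\leq\op{A}_\lambda$ as positive operators, and the spectral radius monotonicity already cited via \cite{Marek1970} yields $\spr(\op{A}_\lambda)\geq\spr(\op{A}'_\lambda)$.

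Because $\op{A}'_\lambda$ has one-dimensional range, a direct eigenvector calculation forces any non-zero eigenvalue to correspond to $u$ proportional to $\mathbf{1}_{B_\eps(x^\star)}/(\lambda+a)$, which lies in $\Lp^2$ since $\lambda+a\geq\lambda+m>0$ on its support. Substitution gives
\begin{equation*}
  \spr(\op{A}'_\lambda) = c\int_{B_\eps(x^\star)}\frac{\dint x}{\lambda+a(x)}.
\end{equation*}
The Lipschitz bound $a(x)-m\leq L|x-x^\star|$ on $B_\eps(x^\star)$ then produces
\begin{equation*}
  \spr(\op{A}'_\lambda) \geq \frac{2c}{L}\log\!\left(1+\frac{L\eps}{\lambda+m}\right),
\end{equation*}
which diverges to $+\infty$ as $\lambda\downarrow-m$. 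Combined with the monotonicity step this gives $\spr(\op{A}_\lambda)\to\infty$, verifying the hypothesis of Theorem \ref{condition} and producing the principal eigenvalue.

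The main obstacle is choosing the dominating object at the right scale. A naive Rayleigh-quotient estimate with the flat test function $\mathbf{1}_{B_\eps(x^\star)}$ produces only the scale-invariant constant $4c/L$, which in general need not exceed $1$. The point is to let the dominating operator's eigenfunction (equivalently, the extremising test function) absorb the near-singular weight $1/(\lambda+a)$ concentrating at $x^\star$; once the dominator is rank one, its spectral radius is a one-term formula and the logarithmic divergence of $\int(\lambda+a)^{-1}\dint x$ becomes manifest. Note that the symmetry of $J$ is not actually needed for this reduction, only the local lower bound on $J$ and the Lipschitz control on $a$ at $x^\star$.
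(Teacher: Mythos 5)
Your proof is correct, and at its core it performs the same estimate as the paper's: both arguments come down to the logarithmic divergence of $\int \frac{\dint x}{\lambda+a(x)}$ near $x^\star$, controlled by the Lipschitz bound $a(x)-a(x^\star)\leq L|x-x^\star|$, and your rank-one eigenfunction $\mathbf{1}_{B_\eps(x^\star)}/(\lambda+a)$ is, up to the substitution $\gamma=\lambda+a(x^\star)$, exactly the test function $u_\gamma=1/(\gamma+a(x)-a(0))$ that the paper borrows from \cite{Hutson2003}. The difference is in how the lower bound on $\spr(\op{A}_\lambda)$ is extracted. The paper applies the sub-eigenfunction inequality of \cite{Drnovsek2000} directly to $\op{A}_\lambda$, checking pointwise that $\op{A}_\lambda u_\gamma\geq K u_\gamma$ with $K>1$, which forces the auxiliary constraint $-a(0)<\lambda<\gamma-a(0)$ so that the denominators $\lambda+a(x)$ and $\gamma+a(x)-a(0)$ can be compared. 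You instead minorize $\op{J}$ by an explicit rank-one positive operator, compute its spectral radius exactly as the one-term formula $c\int_{B_\eps}\frac{\dint x}{\lambda+a(x)}$, and transfer the bound via the monotonicity of the spectral radius under operator domination (the \cite{Marek1970} result the paper already invokes in its continuity lemma). This buys a slightly more self-contained argument, since the nonzero spectrum of a rank-one positive operator is elementary and no appeal to \cite{Drnovsek2000} is needed, and it yields the sharper conclusion that $\spr(\op{A}_\lambda)\to\infty$ as $\lambda\downarrow-\inf a$ rather than merely exceeding $1$; the paper's own bound also diverges as $\gamma\to0$, but the paper does not remark on it. Your observation that the symmetry hypothesis $J(x,y)=J(y,x)$ is never used is consistent with the paper's proof, which likewise does not invoke it.
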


\begin{proof}
  Without loss of generality, assume $x^\star=0$. From \cite{Drnovsek2000}, we
  can get a lower bound on the spectral radius by observing that:
  \begin{equation}
    \op{A}_\lambda u\geq c u 
    \quad 
    \implies 
    \quad
    \spr(\op{A}_\lambda)\geq c
  \end{equation}
  We can then explicitly borrow the test function from \cite{Hutson2003}. Fix
  $\delta$, $\eps$ such that $J(x,y)\geq\eps$ for all $|x|$, $|y|\leq\delta$.
  For every $\gamma>0$ write $u_\gamma$:
  \begin{equation}
    u_\gamma(x)=
    \begin{cases}
      \frac{1}{\gamma+a(x)-a(0)} & \text{if } |x|<\delta \\
      0                          & \text{else}
    \end{cases}
  \end{equation}
  We have the inequalities:
  \begin{align}\nonumber
    \op{J}u_\gamma(x)&=\int\limits_{-\delta}^\delta 
    \frac{J(x,y)}{\gamma+a(y)-a(0)}\dint y \\
    &\geq \eps\int\limits_{-\delta}^\delta \frac{1}{\gamma+C|y|}
    \geq \frac{\eps}{C}\ln\left[\frac{C\delta+\gamma}{\gamma}\right]
  \end{align}
  Choose $\gamma>0$ such that the last term above is greater than 
  one and $-a(0)<\lambda<\gamma-a(0)$. Using the above inequality gives:
  \begin{align}\nonumber
    \op{A}_\lambda u_\gamma(x)&=\frac{\op{J}}{\lambda+a(x)} \\\nonumber
    &\geq\frac{1}{\gamma+a(x)-a(0)}
    \frac{\eps}{C}\ln\left[\frac{C\delta+\gamma}{\gamma}\right] \\
    &\geq u_\gamma(x)
  \end{align}
  We have now shown that $\spr(\op{A}_\lambda)\geq1$ and can invoke Theorem 
  \ref{condition}.
\end{proof}

\begin{prop}\label{converge-inf}
  Assume that $a(x)$ converges to its infimum at either $\pm\infty$, and there
  exists $c$, $c^\prime$ such that $J(x,y)>\eps$ for $|y-x|<\delta$ for all
  $x\in\Omega$. Then, $\op{L}$ has a principal eigenvalue.
\end{prop}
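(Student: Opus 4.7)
The plan is to apply Theorem \ref{condition} by verifying $\lim_{\lambda\to(-\inf a)^+}\spr(\op{A}_\lambda)>1$. Exactly as in Proposition \ref{achieves-inf}, I will use the lower bound from \cite{Drnovsek2000} ($\op{A}_\lambda u\geq cu$ for some $u\geq0$, $u\not\equiv0$ implies $\spr(\op{A}_\lambda)\geq c$) together with an explicit test function. The new ingredient is that, because $a$ need not attain its infimum, the test function must be translated out toward $\pm\infty$ along a sequence on which $a$ approaches $\inf a$; the hypothesis that $J(x,y)\geq\eps$ whenever $|x-y|<\delta$ uniformly in $x$ is precisely what lets the local estimate from Proposition \ref{achieves-inf} be repeated at arbitrary locations.

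Without loss of generality suppose $a(x)\to\inf a$ as $x\to+\infty$, and pick $x_n\to+\infty$ with $a(x_n)\to\inf a$. Take as test function $u_n=\chi_{[x_n-\delta/2,\,x_n+\delta/2]}$. For any $x,y$ in this interval, $|x-y|\leq\delta$, so $J(x,y)\geq\eps$ and hence $\op{J}u_n(x)\geq\eps\delta$ on the support of $u_n$. Continuity of $a$ together with $a(x)\to\inf a$ at $+\infty$ means that for any prescribed $\eta>0$ one can take $n$ large enough that $a(y)\leq\inf a+\eta$ throughout the support of $u_n$. Dividing by $\lambda+a(x)$ and noting that both sides vanish off the support yields
\begin{equation}
\op{A}_\lambda u_n \;\geq\; \frac{\eps\delta}{\lambda+\inf a+\eta}\, u_n,
\end{equation}
so that $\spr(\op{A}_\lambda)\geq \eps\delta/(\lambda+\inf a+\eta)$ for every $\eta>0$ (with $n=n(\eta)$ chosen accordingly) and every $\lambda>-\inf a$.

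Letting $\eta\downarrow0$ and $\lambda\downarrow-\inf a$ drives the denominator to zero, so $\spr(\op{A}_\lambda)\to\infty$ as $\lambda\downarrow-\inf a$, which is strictly stronger than the hypothesis of Theorem \ref{condition} and therefore yields the principal eigenvalue. The main conceptual obstacle is that, unlike Proposition \ref{achieves-inf} where a single bump at the attained minimum suffices, here the test function must be chosen depending on $\lambda$ so that $a$ is sufficiently close to its infimum on its support; the uniform-in-$x$ lower bound on $J$ is exactly what makes that dependence harmless and allows the local estimate to be repeated arbitrarily far out along $x_n$.
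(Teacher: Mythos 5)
Your proof is correct and follows essentially the same route as the paper: translate an indicator test function out toward $+\infty$ where $a$ is near its infimum, apply the lower bound $\op{A}_\lambda u\geq cu\implies\spr(\op{A}_\lambda)\geq c$ from \cite{Drnovsek2000}, and invoke Theorem \ref{condition}. Your version is in fact slightly more careful than the paper's (using a width-$\delta/2$ interval so that $|x-y|\leq\delta$ on the support, and tracking the $\eps\delta$ factor explicitly), but it is the same argument.
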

\begin{proof}
  Without loss of generality, assume $a(x)\to\inf a$ as $x\to\infty$. We can
  construct almost the same inequality as in the proof of Proposition 
  \ref{achieves-inf}. Choose $\lambda$, $z$ such that:
  \begin{equation}
    \frac{\eps}{\lambda+a(x)}>1 
    \quad
    \text{for }|x-z|<\delta
  \end{equation}
  Choosing $u=\mathbf{1}_{(z-\delta, z+\delta)}$ combined with the inequality in
  \cite{Drnovsek2000} completes the proof.  
\end{proof}

Note that the conditions on J for Proposition \ref{converge-inf} are easily
satisfied if $J(x,y) = f(x-y)$ and $f>c>0$ on some neighborhood of zero.
The proof of that proposition could also be easily generalized 
Without enumerating all possible cases, it is obvious that other conditions on
$J$ can be used for other similar cases, e.g. $a(x) =
\frac{1}{1+|x|}+\cos(\theta)$ and that weaker conditions on $J$ can be found
for the cases listed above.

\begin{prop}
  Assume $a(x)$ is locally Lipschitz and reaches its infimum at $x^\star$ and
  $J(x,y)=J(y,x)$.  Also, assume the range of $\op{J}$ is in $\Lp^2$ and there
  exists $\delta$, $\eps>0$ and a bounded set $U$ such that:
  \begin{equation}
    \int_U J(x,y)\dint x > \eps
    \quad
    \text{for  } |y-x^\star|<\delta \text{ a.e.}
  \end{equation}
Then, $\op{L}$ has a principal eigenvalue.
\end{prop}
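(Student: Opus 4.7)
The plan is to use the symmetry $J(x,y)=J(y,x)$ to realize $\op{A}_\lambda$ as a self-adjoint operator with respect to a suitable weighted inner product on $\Lp^2$, and then to exploit the Rayleigh quotient of this self-adjoint operator with the test function $\mathbf{1}_U$. The Lipschitz behavior of $a$ near $x^\star$ will drive the quotient to infinity, forcing the spectral-radius limit to exceed $1$.

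First, I would reformulate the hypothesis. Writing $B:=\{y:|y-x^\star|<\delta\}$, the convention $\op{J}u(y)=\int_\Omega J(y,x)u(x)\dint x$ together with symmetry gives, for a.e.\ $y\in B$,
\begin{equation*}
(\op{J}\mathbf{1}_U)(y)=\int_U J(y,x)\dint x=\int_U J(x,y)\dint x>\eps,
\end{equation*}
so $\op{J}\mathbf{1}_U\geq\eps\mathbf{1}_B$ almost everywhere; note that $\mathbf{1}_U\in\Lp^2$ since $U$ is bounded. Next, introduce the weighted inner product $\langle f,g\rangle_\lambda:=\int(\lambda+a)fg$, which defines an inner product on $\Lp^2$ equivalent to the standard one because $\lambda+a$ is bounded above and below by positive constants. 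The symmetry of $J$ makes $\op{J}$ self-adjoint in the usual $\Lp^2$ pairing, and a short calculation shows that $\op{A}_\lambda$ is self-adjoint with respect to $\langle\cdot,\cdot\rangle_\lambda$. Hence $\spr(\op{A}_\lambda)=\|\op{A}_\lambda\|_\lambda$, and we have the Rayleigh bound
\begin{equation*}
\spr(\op{A}_\lambda)^2\geq\frac{\|\op{A}_\lambda u\|_\lambda^2}{\|u\|_\lambda^2}=\frac{\int(\op{J}u)^2/(\lambda+a)}{\int(\lambda+a)u^2}
\end{equation*}
for every nonzero $u\in\Lp^2$.

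Taking $u=\mathbf{1}_U$, the numerator is at least $\eps^2\int_B(\lambda+a(y))^{-1}\dint y$. The local Lipschitz bound $a(y)-a(x^\star)\leq C|y-x^\star|$ on $B$ (shrinking $\delta$ if necessary) then gives, by a direct one-dimensional integration,
\begin{equation*}
\int_B\frac{\dint y}{\lambda+a(y)}\geq\frac{2}{C}\log\!\left(1+\frac{C\delta}{\lambda+a(x^\star)}\right),
\end{equation*}
which diverges as $\lambda\downarrow-a(x^\star)=-\inf a$. The denominator is at most $(\lambda+c')|U|$, which stays bounded. Hence $\spr(\op{A}_\lambda)\to\infty$ as $\lambda\downarrow-\inf a$, and in particular $\lim_{\lambda\to(-\inf a)^+}\spr(\op{A}_\lambda)>1$; Theorem \ref{condition} then produces the principal eigenvalue.

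The main obstacle, and what distinguishes this from the proof of Proposition \ref{achieves-inf}, is recognizing that the pointwise eigenfunction inequality approach is no longer available here: we only have integrated, not pointwise, control on $J$ near $x^\star$, so the test functions of the type used before cannot be shown to satisfy $\op{A}_\lambda u\geq u$ directly. The symmetry hypothesis is precisely what unlocks the Hilbert-space route by promoting $\op{A}_\lambda$ to a self-adjoint operator in the weighted inner product, and the logarithmic blow-up of $\int_B(\lambda+a)^{-1}$ then amplifies the $\Lp^2$ lower bound on $\op{J}\mathbf{1}_U$ into the required spectral-radius blow-up.
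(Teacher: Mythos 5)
Your proof is correct and takes essentially the same route as the paper: the weighted inner product $\langle\cdot,\cdot\rangle_{\lambda+a}$, self-adjointness of $\op{A}_\lambda$ under it, a test function built from $\mathbf{1}_U$, and the logarithmic divergence of $\int_{|y-x^\star|<\delta}(\lambda+a)^{-1}$ supplied by the Lipschitz bound. The only (harmless) difference is that you lower-bound $\spr(\op{A}_\lambda)=\|\op{A}_\lambda\|_{\lambda+a}$ by the norm ratio $\|\op{A}_\lambda\mathbf{1}_U\|_{\lambda+a}/\|\mathbf{1}_U\|_{\lambda+a}$, whereas the paper uses the Rayleigh quotient of the composite test function $\mathbf{1}_{(-\delta,\delta)}/(\lambda+a)+\mathbf{1}_U$.
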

\begin{proof}
  Again, we will assume $x^\star=0$ for the sake of notation. Define the
  family of inner-products $\langle\cdot,\cdot\rangle_{\lambda+a}$ by:
  \begin{equation}
    \langle f, g\rangle_{\lambda+a} = 
    \int_\Omega f(x)\bar{g}(x)\Big(\lambda+a(x)\Big)\dint x
  \end{equation}
  It is easy to observe that $\op{A}_\lambda$ is self-adjoint with respect to
  the inner-product $\langle\cdot, \cdot\rangle_{\lambda+a}$ and that
  $\langle\cdot,\cdot\rangle_{\lambda+a}$ is topologically equivalent to the
  usual $\Lp^2$ inner-product for all $\lambda>-\inf a$.

  Need to show there exists $v$ and $\lambda$ such that:
  \begin{equation}
    \frac{\langle\op{A}_\lambda v,v\rangle_{\lambda+a}}
         {\langle v,v\rangle_{\lambda+a}}>1
  \end{equation}
  Use:
  \begin{equation}
    v_\lambda = \frac{\mathbf{1}_{(-\delta,\delta)}}{\lambda+a(x)}+\mathbf{1}_U
  \end{equation}
  where $U$ and $\delta$ are as in the statement of the proposition. Choose
  some $\lambda^\star>-\inf a$ and observe that:
  \begin{equation}
    \langle v_\lambda,v_\lambda\rangle_{\lambda+a}
    \leq \langle v_{\lambda^\star}, v_{\lambda^\star}\rangle_{\lambda^\star+a}=C
  \end{equation}
  for all $\lambda\leq\lambda^\star$. 

  Define $\gamma=\lambda-\inf a$. We can now construct the sequence of
  inequalities:
  \begin{align}\nonumber
    \langle \op{A}_\lambda v_\lambda, v_\lambda\rangle_{\lambda+a} &=
    \left\langle\frac{\op{J} v_\lambda}
                     {\lambda+a},v_\lambda\right\rangle_{\lambda+a} 
    \\\nonumber
    &=\Big\langle\op{J}v_\lambda,v_\lambda\Big\rangle \\\nonumber
    &\geq\int_\Omega\frac{\op{J}\mathbf{1}_U(x)}{\lambda+a(x)}\dint x \\
    &\geq\int\limits_{-\delta}^\delta\frac{\eps}{\gamma+c|x|}\dint x
    \geq \frac{\eps}{c}\ln\left[\frac{c\delta+\gamma}{\gamma}\right]
  \end{align}
  Choosing $\lambda$ sufficiently small gives $\langle\op{A}_\lambda
  v_\lambda,v_\lambda\rangle_{\lambda+a}>C$, which gives the desired result.
\end{proof}

Finally, a counter-example to show that the Lipschitz hypothesis is necessary
when $a(x)$ reaches its global minimum and is bounded away from that minimum
elsewhere. For simplicity, we will take $\Omega=S^1$. Take the family of
systems where $J=\frac{1}{2\pi}$ and $a(x)=c|x|^\alpha+c^\prime$ where
$c,\,c^\prime>0$ and $0<\alpha<1$. Obviously, $a$ is $\alpha$-H\"{o}lder
continuous. Assume we have an eigenpair $\mu$, $v(x)$. Dividing through
wherever $a+\mu\neq0$, we have:
\begin{equation}
  v  =\frac{\int v(\omega)\dint\omega}{c|\theta|^\alpha+c^\prime+\mu}
\end{equation}
That means $\int v(\omega)\dint\omega=1$ and
$v=\frac{1}{c|\theta|^\alpha+c^\prime+\mu}$. However, if we set:
\begin{equation}
  c=2\int\limits_{S^1}|\theta|^{-\alpha}\dint\theta
\end{equation}
the above equations are not solvable for any pair $c,\,\mu$ since $\int
v(\omega)\dint\omega\leq\frac{1}{2}$ for all $\mu\geq-c^\prime$. If
$\mu<c^\prime$, the integral of the potential eigenfunction is no longer
defined. That implies that there is not any eigenvalues for $\op{A}$.

\bibliographystyle{plain}
\bibliography{orient}

\end{document}